\def\e{\epsilon}
\def\p{\partial}
\newcommand\C{{\mathbb C}}
\def\be{\begin{equation}}
\def\ee{\end{equation}}
\def\bee{\begin{equation*}}
\def\eee{\end{equation*}}
\def\be{{\beta}}
\def\e{\epsilon}
\def\p{\partial}
\def\C{\Bbb C}
\def\p{\partial}
\def\p{\partial}
\def\C{\Bbb C}
\def\e{\epsilon}
\def\p{\partial}
\def\C{\Bbb C}
\def\p{\partial}
\def\p{\partial}
\def\C{\Bbb C}
\def\la{\langle}
\def\ra{\rangle}
\def\vv<#1,#2>{\left\la#1,#2\right\ra}
\newtheorem{thm}{Theorem}[section]
\newtheorem{lem}{Lemma}[section]
\theoremstyle{definition}
\theoremstyle{remark}
\numberwithin{equation}{section}
\def\be{\begin{equation}}
\def\ee{\end{equation}}
\def\ord{{\rm ord}}
\begin{document}
\title{Rigidity of the sharp Bezout estimate on nonnegatively curved Riemann surfaces}

\author{Chengjie Yu$^1$}
\address{Department of Mathematics, Shantou University, Shantou, Guangdong, 515063, China}
\email{cjyu@stu.edu.cn}
\author{Chuangyuan Zhang}
\address{Department of Mathematics, Shantou University, Shantou, Guangdong, 515063, China}
\email{12cyzhang@stu.edu.cn}
\thanks{$^1$Research partially supported by GDNSF with contract no. 2021A1515010264 and NNSF of China with contract no. 11571215.}
\renewcommand{\subjclassname}{%
  \textup{2010} Mathematics Subject Classification}
\subjclass[2010]{Primary 53C55; Secondary 32A10}
\date{}
\keywords{Bezout estimate, holomorphic function, three circle theorem}
\begin{abstract}
In this short note, by using a general three circle theorem, we show the rigidity of the sharp Bezout estimate first found by Gang Liu on nonnegatively curved Riemann surfaces.
\end{abstract}
\maketitle\markboth{Yu \& Zhang}{Rigidity of a Bezout estimate}
\maketitle
\section{Introduction}
On a complete noncompact K\"ahler manifold $(M,g)$, the space of holomorphic functions on $M$ is denoted as $\mathcal O(M)$. A function $f$ on $M$ is said to be of polynomial growth if there are some positive constants $C$ and $d$ such that \begin{equation}\label{eq-polynomial-growth}
|f(x)|\leq C(1+r^d(x))\ \forall x\in M,
\end{equation}
where $r(x)$ the distance function to some fixed point. The space of holomorphic functions of polynomial growth is denoted as $\mathcal P(M)$, and the space of holomorphic functions on $M$ satisfying \eqref{eq-polynomial-growth} for some $C>0$ is denoted as $\mathcal O_d(M)$. For any nonzero $f\in \mathcal P(M)$, the degree of $f$ is defined as
\begin{equation}
\deg(f)=\inf\{d>0\ |\ f\in \mathcal O_d(M)\}.
\end{equation}
For $x\in M$ and $f\in \mathcal O(M)$, $\ord_x(f)$ means the vanishing order of $f$ at $x$.

The purpose of this short note is to give a proof of the following result.
\begin{thm}\label{thm-Bezout}
Let $(M,g)$ be a noncompact Riemann surface equipped with a complete conformal metric $g$  such that the Guassian curvature of $g$ is nonnegative. Then, for any nonzero holomorphic function $f$ of polynomial growth on $M$,
\begin{equation}\label{eq-Bezout}
\sum_{x\in M}\ord_x(f)\leq \deg(f).
\end{equation}
Moreover, the equality of \eqref{eq-Bezout} holds for some nonconstant holomorphic function $f$ of polynomial growth if and only if $(M,g)$ is biholomorphically isometric to $\C$ equipped with the standard metric.
\end{thm}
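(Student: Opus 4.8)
The plan is to run the classical Poisson--Jensen machinery on geodesic balls of $(M,g)$, fed by two curvature inputs: the Laplacian comparison for the distance function, and the general three circle theorem. Throughout, fix $p\in M$, write $r=d_g(p,\cdot)$, $M_f(r)=\sup_{B(p,r)}|f|$ and $\psi(t)=\log M_f(e^t)$. Since the Gaussian curvature is $\ge 0$, Laplacian comparison gives $\Delta_g r\le 1/r$, hence $\Delta_g\log r=\tfrac{\Delta_g r}{r}-\tfrac1{r^2}\le 0$ away from $p$; writing $\Delta_g\log r=2\pi\delta_p+\mu$ as distributions, the ``comparison defect'' $\mu$ is thus a nonpositive measure on $M\setminus\{p\}$ (with a nonpositive singular part along $\mathrm{Cut}(p)$). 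Superharmonicity of $\log r$ gives, by a maximum principle on annuli, the three circle theorem: $\psi$ is convex. Using the polynomial growth of $f$ one then checks $\lim_{t\to\infty}\psi(t)/t=\deg(f)$ (the bound $\le$ is immediate; if it were $<$, integrating $\psi'\le\sup\psi'$ would place $f$ in some $\mathcal O_{d'}(M)$ with $d'<\deg(f)$); convexity upgrades this to the key estimate
$$\psi(t)\le \deg(f)\,t+C_1\qquad(t\ \text{large}).$$
Finally, comparing the positive Green's function $G_R$ of $B(p,R)$ (normalized by $\Delta_g G_R(\cdot,y)=-2\pi\delta_y$) with the subharmonic function $\log(R/r)$ on $B(p,R)$ — both vanish on $\partial B(p,R)$ and their difference is superharmonic with a removable singularity at $p$ — gives $G_R(x,p)\ge \log\!\big(R/r(x)\big)$ on $B(p,R)$.

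\emph{The inequality \eqref{eq-Bezout}.} We may assume $f(p)\ne0$, since moving $p$ off the zero set $Z(f)$ alters neither side. For $R$ with $\partial B(p,R)$ regular and disjoint from $Z(f)$, the Riesz decomposition of $\log|f|$ (its Riesz measure is $2\pi\sum_a\ord_a(f)\delta_a$ by Poincar\'e--Lelong) reads
$$\log|f(p)|=h_R(p)-\sum_{a\in B(p,R)}\ord_a(f)\,G_R(p,a),$$
where $h_R(p)=\int_{\partial B(p,R)}\log|f|\,d\omega_p\le\log M_f(R)$, $\omega_p$ being harmonic measure at $p$. Keeping only the zeros in a fixed ball $B(p,r_0)$ and using $G_R(p,a)\ge\log(R/r_0)$ there, one gets $n(r_0)\log(R/r_0)\le\psi(\log R)-\log|f(p)|$, with $n(r_0)=\sum_{a\in B(p,r_0)}\ord_a(f)$; dividing by $\log R$ and sending $R\to\infty$ gives $n(r_0)\le\deg(f)$, and then $r_0\to\infty$ gives \eqref{eq-Bezout}.

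\emph{The rigidity.} The ``if'' direction is immediate: on $\C$ with the standard metric $\mathcal P(M)=\C[z]$ and $\sum_x\ord_x(P)=\deg(P)$ for every polynomial $P$. Conversely, assume $\sum_x\ord_x(f)=\deg(f)=:d$ for a nonconstant $f$; then $d\ge1$, equality in \eqref{eq-Bezout} forces $Z(f)$ finite, and (moving $p$ as above) $f(p)\ne0$. On $B(p,R)$ consider the renormalized function
$$u_R:=G_R(\cdot,p)-\log\frac{R}{r}\ \ge\ 0,$$
which vanishes on $\partial B(p,R)$, is bounded near $p$, and — crucially — satisfies $\Delta_g u_R=\mu$, the \emph{same} nonpositive measure for every $R$; hence $u_R(x)=\frac1{2\pi}\int_{B(p,R)}G_R(x,y)\,d(-\mu)(y)$. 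Now $(M,g)$ has at most quadratic volume growth (Bishop comparison, $K\ge0$), hence is parabolic, so the Green's functions of the exhaustion $\{B(p,R)\}$ increase to $+\infty$; consequently, \emph{if $\mu\ne0$}, then $u_R(a)\to+\infty$ for any zero $a\ne p$ of $f$. On the other hand, for $R$ large enough to contain $Z(f)$, Poincar\'e--Lelong and symmetry of $G_R$ yield
$$\sum_{a\in Z(f)}\ord_a(f)\,u_R(a)=\big(h_R(p)-d\log R\big)+\text{const}\ \le\ \big(\psi(\log R)-d\log R\big)+\text{const}\ \le\ C_1+\text{const},$$
uniformly in $R$; since each summand is $\ge0$ and $d\ge1$, every $u_R(a)$ stays bounded. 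Therefore $\mu=0$, i.e.\ $\Delta_g\log r=2\pi\delta_p$ on all of $M$. This forces the cut locus of $p$ to be empty — so $\exp_p\colon\R^2\to M$ is a diffeomorphism and $M$ is simply connected — and $\Delta_g r=1/r$ on $M\setminus\{p\}$; writing $g=dt^2+J(t,\theta)^2 d\theta^2$ in geodesic polar coordinates, $\partial_t J/J=1/t$ with $J(0,\theta)=0$, $\partial_tJ(0,\theta)=1$ forces $J\equiv t$, so $g=dt^2+t^2 d\theta^2$. Thus $(M,g)$ is isometric to $(\R^2,g_{\mathrm{euc}})=(\C,g_{\mathrm{euc}})$, and the isometry, being conformal, may be taken holomorphic.

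\emph{Main obstacle.} The delicate point is the rigidity step: converting equality in the integral bound \eqref{eq-Bezout} into pointwise geometric rigidity. The device that makes it work is the renormalized Green's function $u_R=G_R(\cdot,p)-\log(R/r)$, whose Laplacian is exactly the fixed comparison defect $\mu$: parabolicity forces $\int G_R\,d(-\mu)$ to diverge unless $\mu\equiv0$, while the estimate $\psi(t)\le\deg(f)\,t+O(1)$ — precisely the contribution of the general three circle theorem — is what keeps $u_R$ bounded at the zeros of $f$ and so prevents the divergence. Secondary care is needed with the distributional Laplacian of $r$ across the cut locus and with the assertion that $\mu=0$ forces $\mathrm{Cut}(p)=\emptyset$.
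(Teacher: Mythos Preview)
Your argument is essentially correct and follows a genuinely different route from the paper's. The paper never introduces Green's functions or the Poisson--Jensen formula; instead it applies its general three circle theorem (Theorem~\ref{thm-three-circle}) with the superharmonic function $v=\log\rho$, where $\rho=\prod_i r_{p_i}^{m_i}$ is the product of distances to the zeros of $f$, and obtains directly that $\log M_\rho(|f|,r)$ is convex in $\log r$. Two elementary lemmas (the asymptotics of $M_\rho(|f|,r)$ as $r\to0^+$ and a comparison between $M_\rho$ and $M_o$) then yield $M_o(|f|,r)\gtrsim r^N$, giving \eqref{eq-Bezout}. For the rigidity, the paper shows that when equality holds $\log M_\rho(|f|,r)-\log r$ is \emph{constant}; the strong maximum principle then forces $\log|f|-\log\rho\equiv C$, so $\log\rho$ is harmonic, whence each superharmonic summand $\log r_{p_i}$ is harmonic and equality holds in the Laplacian comparison. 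Your approach, by contrast, uses the three circle theorem only for the single radius function $r_p$ (to get $\psi(t)\le \deg(f)\,t+O(1)$), feeds this into Poisson--Jensen on geodesic balls, and for rigidity exploits parabolicity via the renormalized potential $u_R=G_R(\cdot,p)-\log(R/r)$ to force the comparison defect $\mu$ to vanish. The paper's device of bundling all zeros into $\rho$ is slicker and avoids potential theory entirely; your Nevanlinna-style argument is more classical, makes the role of parabolicity explicit, and --- since it isolates the obstruction as a single measure $\mu$ --- gives a clean reason why the cut locus must be empty (so the cylinder is excluded without the paper's separate observation that cylinders carry no nonconstant $f\in\mathcal P$). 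The technical caveats you flag (distributional $\Delta r$ across $\mathrm{Cut}(p)$, no atoms for $\mu$, and $\mu=0\Rightarrow\mathrm{Cut}(p)=\emptyset$) are genuine but standard.
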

The estimate \eqref{eq-Bezout} was first obtained in the unpublished preprint \cite{Liu1} by Liu. It is clear that the estimate \eqref{eq-Bezout} is sharp because the equality of  the estimate holds for any nonzero polynomial on $\C$. In this short note, we give an alternative proof of \eqref{eq-Bezout} using the following general three circle theorem on Riemann surfaces, in a similar spirit of \cite{Liu2} and characterize the rigidity of the estimate \eqref{eq-Bezout}.

\begin{thm}\label{thm-three-circle}Let $M$ be a Riemann surface and $\Omega$ be an open subset of $M$. Let $u$ and $v$ be two continuous functions on $\Omega$ that are subharmonic  and  supharmonic  respectively. Suppose that $v:\Omega\to (\inf_{\Omega}v, \sup_\Omega v)$ is proper and $M_v(u,t)=\max_{x\in S_v(t)}u(x)$ is an increasing function for $t\in(\inf_{\Omega}v, \sup_\Omega v)$ where $S_v(t)=\{x\in\Omega\ |\ v(x)=t\}$. Then, $M_v(u,t)$ is a convex function of $t\in (\inf_\Omega v,\sup_\Omega v)$.
\end{thm}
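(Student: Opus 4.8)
The plan is to run the classical Hadamard three-circle comparison argument, with $v$ taking the role played by $\log|z|$ in the annulus case. Since a real function on an interval $I$ is convex exactly when, for all $t_1<t_2<t_3$ in $I$,
\begin{equation*}
\phi(t_2)\le\frac{t_3-t_2}{t_3-t_1}\,\phi(t_1)+\frac{t_2-t_1}{t_3-t_1}\,\phi(t_3),
\end{equation*}
it suffices to fix $t_1<t_2<t_3$ in $I:=(\inf_\Omega v,\sup_\Omega v)$ and verify this inequality for $\phi(t)=M_v(u,t)$. (Note that $S_v(t)\neq\emptyset$ for every $t\in I$ is built into the hypothesis that $M_v(u,\cdot)$ is a real-valued increasing function on $I$, and that $S_v(t)=v^{-1}(\{t\})$ is compact by properness, so the maximum defining $M_v$ is genuinely attained.)

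Next I would introduce the affine function $\ell(t)=at+b$ with $\ell(t_1)=M_v(u,t_1)$ and $\ell(t_3)=M_v(u,t_3)$; its slope $a=\big(M_v(u,t_3)-M_v(u,t_1)\big)/(t_3-t_1)$ is $\ge0$ precisely because $M_v(u,\cdot)$ is increasing. Set $\Omega':=v^{-1}\big((t_1,t_3)\big)$. Since $v\colon\Omega\to I$ is proper and $[t_1,t_3]\subseteq I$, the set $v^{-1}([t_1,t_3])$ is compact, so $\overline{\Omega'}$ is a compact subset of $\Omega$ with $\partial\Omega'\subseteq v^{-1}(\{t_1,t_3\})=S_v(t_1)\cup S_v(t_3)$, while $S_v(t_2)\subseteq\Omega'$. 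One then considers $w:=u-\ell\circ v=u-av-b$ on $\overline{\Omega'}$: because $a\ge0$ and $v$ is superharmonic, $-av$ is subharmonic, so $w$ is continuous on $\overline{\Omega'}$ and subharmonic on $\Omega'$.

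The final step is to invoke the maximum principle for subharmonic functions on the relatively compact open set $\Omega'$: $\max_{\overline{\Omega'}}w=\max_{\partial\Omega'}w$. On $S_v(t_j)$ one has $v\equiv t_j$, so there $w=u-\ell(t_j)$ and $\max_{S_v(t_j)}w=M_v(u,t_j)-\ell(t_j)=0$ for $j=1,3$. Hence $w\le0$ throughout $\overline{\Omega'}$, and restricting to $S_v(t_2)\subseteq\Omega'$ gives $u\le\ell(t_2)$ on $S_v(t_2)$, i.e. $M_v(u,t_2)\le\ell(t_2)$ — exactly the required three-point inequality, so $M_v(u,\cdot)$ is convex.

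The delicate points (rather than genuine obstacles) are two. First, properness is used essentially: it is what makes $\overline{\Omega'}$ compact and each $S_v(t)$ compact, which legitimises both the maximum principle and the attainment of $M_v$. Second, the monotonicity hypothesis enters exactly once, to force $a\ge0$ so that $-av$, and hence $w$, is subharmonic — this is the only place the one-sided nature of the hypotheses on $u$ (subharmonic) and $v$ (superharmonic) is exploited, and it is why monotonicity in the opposite direction would not suffice. One should also dispose of the degenerate case of a compact connected component of $\Omega'$, which would have empty boundary: on such a component $v$ would be superharmonic on a closed Riemann surface, hence constant, forcing the component to be open and closed in $M$ and, when $M$ is connected, forcing $v(M)\subseteq(t_1,t_3)$ contrary to the surjectivity of $v$ onto $I$; so this case does not arise (and $M$ is connected in the situation of Theorem~\ref{thm-Bezout}).
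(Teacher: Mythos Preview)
Your proof is correct and follows essentially the same route as the paper's: both compare $u$ with the affine interpolant $\ell\circ v$ (the paper calls it $\tilde v$), use the monotonicity of $M_v(u,\cdot)$ to ensure the slope is nonnegative so that $\ell\circ v$ stays superharmonic, and then apply the maximum principle on the ``annulus'' $v^{-1}((t_1,t_3))$ to conclude the three-point inequality. You are simply more explicit than the paper about why properness makes the maximum principle legitimate and about the degenerate compact-component case.
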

It seems that the original proof in \cite{Liu1} can not characterize the rigidity of the estimate \eqref{eq-Bezout}. In higher dimension, there is an estimate in similar spirit which is not sharp in \cite[Proposition 7.2]{CTZ}.

In algebraic geometry, the Bezout theorem relates the number of roots and the degrees of polynomials.  So, an estimate such as \eqref{eq-Bezout} is called a Bezout estimate (See \cite[P. 244]{Mok}). The Bezout estimate \eqref{eq-Bezout} is stronger than the sharp vanishing order estimate:
\begin{equation}\label{eq-vanishing-order}
\ord_x(f)\leq \deg(f)
\end{equation}
for any nonzero $f\in \mathcal P(M)$ and any $x\in M$. Vanishing order estimate that is not sharp was first obtained by Mok \cite{Mok} on complete noncompact K\"ahler manifolds with certain geometric conditions. In \cite{Ni}, Ni obtained the sharp estimate \eqref{eq-vanishing-order} under the assumptions of nonnegative holomorphic bisectional curvature and maximal volume growth. The second assumption of Ni's result was later removed by Chen-Fu-Yin-Zhu \cite{CFYZ} and finally the first assumption of Ni' result was relaxed to nonnegative holomophic sectional curvature by Liu \cite{Liu2}. According to all these works, it seems that there must be a  higher dimensional analogue of the sharp Bezout estimate \eqref{eq-Bezout}.
\section{Proof of the theorem}
Although the proof of the general three circle theorem (Theorem \ref{thm-three-circle}) is almost the same as that of the classical Hadamard's three circle theorem on $\C$, we give a proof to it for completeness.
\begin{proof}[Proof of Theorem \ref{thm-three-circle}]
For any $\inf_\Omega v<t_1<t_2<t_3<\sup_\Omega v$, let 
\begin{equation}
\tilde v=\frac{M_v(u,t_3)-M_v(u,t_1)}{t_3-t_1}v+\frac{t_3M_v(u,t_1)-t_1M_v(u,t_3)}{t_3-t_1}.
\end{equation}
Because $M_v(u,t_3)-M_v(u,t_1)\geq 0$, $\tilde v$ is supharmonic. Moreover, it is clear that 
$$\tilde v\geq u$$
on $S_v(t_1)\cup S_v(t_3)$. Hence, by maximum principle, 
$$\tilde v\geq u$$
in $A_v(t_1,t_3):=\{x\in \Omega\ |\ t_1<v(x)<t_3\}.$ Thus, for any $x\in S_v(t_2)$, 
\begin{equation}
\begin{split}
u(x)\leq& \tilde v(x)\\
=&\frac{M_v(u,t_3)-M_v(u,t_1)}{t_3-t_1}t_2+\frac{t_3M_v(u,t_1)-t_1M_v(u,t_3)}{t_3-t_1}\\
=&\frac{t_2-t_1}{t_3-t_1}M_v(u,t_3)+\frac{t_3-t_2}{t_3-t_1}M_v(u,t_1).\\
\end{split}
\end{equation}
Hence
\begin{equation}
M_v(u,t_2)\leq \frac{t_2-t_1}{t_3-t_1}M_v(u,t_3)+\frac{t_3-t_2}{t_3-t_1}M_v(u,t_1).
\end{equation}
This completes the proof of the theorem.
\end{proof}
Before the proof of Theorem \ref{thm-Bezout}, we need the following two lemmas.
\begin{lem}\label{lem-limit-r-0}
Let $(M,g)$ be a noncompact Riemann surface equipped a conformal metric $g$ and $f$ be a nonzero holomorphic function on $M$. Let $p_1,p_2,\cdots,p_n$ be $n$ distinct roots of $f$, $m_i=\ord_{p_i}(f)$ and $\rho=\prod_{i=1}^nr_{p_i}^{m_i}$. Then,
\begin{equation}
\lim_{r\to {0^+}}\frac{\log M_\rho(|f|,r)}{\log r}=1.
\end{equation}
Here $r_p$ means the distance function to $p$.
\end{lem}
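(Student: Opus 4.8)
The plan is to sandwich $M_\rho(|f|,r)$ between two positive multiples of $r$ for small $r>0$: I will produce constants $0<c\le C$ and $r_0>0$ with $c\,r\le M_\rho(|f|,r)\le C\,r$ for all $0<r<r_0$. Granting this, taking logarithms and dividing by $\log r$ (which is negative for $r<1$) shows that $\log M_\rho(|f|,r)/\log r$ lies between $1+(\log C)/\log r$ and $1+(\log c)/\log r$, so that letting $r\to0^+$ forces it to $1$. The structural fact behind the two-sided bound is that the level sets localize at the roots: since $\rho\ge0$ and $\rho^{-1}(0)=\{p_1,\dots,p_n\}$ is finite, any $x$ with $\rho(x)=r$ must have $r_{p_i}(x)$ small for at least one $i$ once $r$ is small, hence $S_\rho(r)\subset\bigcup_{i=1}^n B(p_i,\delta)$ with $\delta=\delta(r)\to0$ as $r\to0^+$; in particular any other zeros of $f$ play no role. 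It therefore suffices to estimate $|f|$ on $S_\rho(r)$ one chart at a time near each $p_i$.

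Near $p_i$ I would fix a holomorphic coordinate $z$ with $z(p_i)=0$ and write $f=z^{m_i}h$ with $h$ holomorphic and $h(p_i)\ne0$; on a fixed ball $B_i=B(p_i,\delta_0)$ sitting inside the chart, $h$ is bounded above and below by positive constants and the distance function satisfies $a_i|z|\le r_{p_i}(z)\le b_i|z|$ for positive $a_i,b_i$. For the upper bound, on $S_\rho(r)\cap B_i$ the product $\prod_{j\ne i}r_{p_j}^{m_j}$ is bounded below by a positive constant $\kappa_i$ (the other roots lie a fixed positive distance from $B_i$), so $r_{p_i}(x)^{m_i}\le\rho(x)/\kappa_i=r/\kappa_i$ there; combined with $|f|\le(\sup_{B_i}|h|)\,|z|^{m_i}\le a_i^{-m_i}(\sup_{B_i}|h|)\,r_{p_i}^{m_i}$ this gives $|f(x)|\le C_i\,r$ on $S_\rho(r)\cap B_i$, and $C=\max_i C_i$ works.

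For the lower bound it is enough to produce, for each small $r$, a single point $x_r$ with $\rho(x_r)=r$ and $|f(x_r)|\ge c\,r$. Working near $p_1$, I would restrict $\rho$ to the segment $\{z=t:0<t\le\delta_0\}$, obtaining a continuous function $\eta(t)=r_{p_1}(t)^{m_1}\prod_{j\ge2}r_{p_j}(t)^{m_j}$ with $\eta(t)\to0$ as $t\to0^+$ and $\eta(\delta_0)>0$; by the intermediate value theorem there is $t_r\in(0,\delta_0)$ with $\eta(t_r)=r$ whenever $0<r<\eta(\delta_0)$. Using $r_{p_1}(t)\le b_1 t$ and boundedness of the remaining factors near $p_1$, one gets $t_r^{m_1}\ge\lambda\,r$ for a positive constant $\lambda$, while $|f(t_r)|=t_r^{m_1}|h(t_r)|\ge(\inf_{B_1}|h|)\,t_r^{m_1}$; hence the point $x_r$ corresponding to $t_r$ satisfies $|f(x_r)|\ge c\,r$ with $c=\lambda\inf_{B_1}|h|>0$, so $M_\rho(|f|,r)\ge c\,r$. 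The two local inputs here---the factorization $f=z^{m_i}h$ and the comparison $r_{p_i}\asymp|z|$ in a holomorphic chart---are routine; the one point needing a little care is the localization $S_\rho(r)\subset\bigcup_i B(p_i,\delta(r))$ with $\delta(r)\to0$, which is what legitimizes working chart by chart and which holds because $\rho$ is a finite product of distance functions vanishing precisely at the $p_i$.
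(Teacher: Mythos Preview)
Your proposal is correct and follows essentially the same approach as the paper: localize $S_\rho(r)$ to small balls around the $p_i$ via the trivial observation that $\rho(x)<\delta_0^N$ forces some $r_{p_i}(x)<\delta_0$, then use the local estimate $|f|\asymp r_{p_i}^{m_i}$ together with two-sided bounds on $\prod_{j\ne i}r_{p_j}^{m_j}$ to sandwich $|f(x)|$ between positive multiples of $r$. The only difference is that the paper obtains the lower bound pointwise on all of $S_\rho(r)$ (the same estimates that give your upper bound also give $r_{p_i}^{m_i}(x)\ge r/K_i$ and hence $|f(x)|\ge c_i r$), so your intermediate-value construction of a single good point $x_r$, while correct, is not needed.
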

\begin{proof}
Let $r_0=\frac{1}{4}\min\{r(p_i,p_j)\ |\ 1\leq i<j\leq n \}$ and $\delta_0\in (0,r_0]$ be such that for any $i=1,2,\cdots,n$ and $x\in B_{p_i}(\delta_0)$,
\begin{equation}\label{eq-order-f}
 c_1 r_{p_i}^{m_i}(x)\leq|f(x)|\leq C_2 r_{p_i}^{m_i}(x)
\end{equation}
for some positive constants $c_1$ and $C_2$. Let $\rho_0=\delta_0^N$ where $N=\sum_{i=1}^nm_i$. Then, for any $x\in M$ such that $\rho(x)=r<\rho_0$, $x\in B_{p_i}(\delta_0)$ for some $i=1,2,\cdots,n$.  Then, for any $j\neq i$, by the triangle inequality,
\begin{equation}
\delta_0<r(p_i,p_j)-r_{p_i}(x)\leq r_{p_j}(x)\leq r(p_i,p_j)+r_{p_i}(x)\leq R_0+\delta_0
\end{equation}
where $R_0=\max\{r(p_i,p_j)\ |\ 1\leq i<j\leq n\}.$ Thus,
\begin{equation}
\frac{r}{(R_0+\delta_0)^{N-m_i}}\leq r_{p_i}^{m_i}(x)=\frac{\rho(x)}{r_{p_1}^{m_1}(x)\cdots \widehat{r_{p_i}^{m_i}(x)}\cdots r_{p_n}^{m_n}(x)}\leq \frac{r}{\delta_0^{N-m_i}}.
\end{equation}
Combining this with \eqref{eq-order-f}, we know that
\begin{equation}
\frac{c_1r}{(R_0+\delta_0)^{N-m_i}}\leq |f(x)|\leq \frac{C_2r}{\delta_0^{N-m_i}}.
\end{equation}
So, for any $x\in M$ with $\rho(x)=r<\rho_0$, we have
\begin{equation}
c_3 r\leq |f(x)|\leq C_4r
\end{equation}
where $$c_3=\min\left\{\frac{c_1}{(R_0+\delta_0)^{N-m_i}}\ \Bigg|\ i=1,2,\cdots,n\right\}$$ and $$C_4=\max\left\{\frac{C_2}{\delta_0^{N-m_i}}\ \Bigg|\ i=1,2,\cdots,n\right\}.$$
Hence
\begin{equation}
\frac{C_4}{\log r}+1\leq \frac{\log M_\rho(|f|,r)}{\log r}\leq 1+\frac{c_3}{\log r}
\end{equation}
for any $r>0$ small enough. Then, by the squeezing rule, we complete the proof of the lemma.
\end{proof}
\begin{lem}\label{lem-compare-M}
Let $(M,g)$ be a noncompact Riemann surface equipped with a complete conformal metric $g$. Let $p_1, p_2,\cdots, p_n$ be $n$ distinct points on $M$, $m_1,m_2,\cdots,m_n\in \mathbb N$ and  $\rho=r_{p_1}^{m_1} r_{p_2}^{m_2}\cdots r_{p_n}^{m_n}$. Let $o$ be a fixed point on $M$,  $$R_0=\max\{r(o,p_i)\ |\ 1\leq i\leq n\}.$$ Then, for any holomorphic function $f$ on $M$ and any $r>(2R_0)^N$,
\begin{equation}
M_o(|f|, r^\frac1N-R_0)\leq M_\rho(|f|,r)\leq M_o(|f|,r^\frac1N+R_0)
\end{equation}
where $N=\sum_{i=1}^nm_i$ and $M_o(|f|,r)=\max_{x\in S_o(r)}|f(x)|$ with $S_o(r)=\p B_o(r)$.
\end{lem}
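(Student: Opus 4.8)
The plan is to trap the level and sublevel sets of $\rho$ between closed metric balls centred at $o$, using only the triangle inequality, and then to replace ``maximum over a level set of $\rho$'' by ``maximum over the corresponding compact sublevel set'' by invoking the maximum principle for the subharmonic function $|f|$.

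First I would record the triangle-inequality bounds: $r_{p_i}\le r_o+r(o,p_i)\le r_o+R_0$ holds everywhere, and $r_{p_i}\ge r_o-r(o,p_i)\ge r_o-R_0\ge 0$ holds on $\{r_o\ge R_0\}$. Raising to the powers $m_i$ and multiplying gives $\rho\le(r_o+R_0)^N$ on all of $M$ and $\rho\ge(r_o-R_0)^N$ on $\{r_o\ge R_0\}$. Since $r>(2R_0)^N$ forces $r^{1/N}-R_0>R_0\ge 0$, these two bounds produce the sandwich
\begin{equation*}
\{r_o\le r^{1/N}-R_0\}\subseteq\{\rho\le r\}\subseteq\{r_o\le r^{1/N}+R_0\}:
\end{equation*}
the left inclusion because $r_o(x)\le r^{1/N}-R_0$ makes every $r_{p_i}(x)\le r^{1/N}$, hence $\rho(x)\le r$; the right inclusion because $\rho(x)\le r$ forces $r_o(x)\le r^{1/N}+R_0$ on $\{r_o\ge R_0\}$, while the leftover region $\{r_o\le R_0\}$ already lies inside the outer ball.

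Next I would bring in the maximum principle. The function $|f|$ is subharmonic on $M$ (away from the zeros of $f$ it is an increasing convex function of the harmonic function $\log|f|$, and at a zero it vanishes and hence lies below its averages). Completeness of $g$ makes the closed metric balls $\{r_o\le s\}$ compact by Hopf--Rinow, and since $(r_o-R_0)^N$ blows up as $r_o\to\infty$ the lower bound on $\rho$ makes $\{\rho\le r\}$ closed and bounded, hence compact. The topological boundary of $\{r_o\le s\}$ is contained in $\{r_o=s\}$, which for $s>0$ coincides with the sphere $S_o(s)=\partial B_o(s)$ (every point at distance $s$ is a limit of interior points along a minimizing geodesic), and the topological boundary of $\{\rho\le r\}$ is contained in $\{\rho=r\}$; since each such ``sphere'' moreover lies inside its ``ball'', the weak maximum principle yields
\begin{equation*}
M_o(|f|,s)=\max_{\{r_o\le s\}}|f|,\qquad M_\rho(|f|,r)=\max_{\{\rho\le r\}}|f|.
\end{equation*}

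Finally I would chain: the left inclusion of the sandwich gives $M_o(|f|,r^{1/N}-R_0)=\max_{\{r_o\le r^{1/N}-R_0\}}|f|\le\max_{\{\rho\le r\}}|f|=M_\rho(|f|,r)$, and the right inclusion gives $M_\rho(|f|,r)=\max_{\{\rho\le r\}}|f|\le\max_{\{r_o\le r^{1/N}+R_0\}}|f|=M_o(|f|,r^{1/N}+R_0)$, which is the claim. The step I expect to require real care is the topological one: showing that $\{\rho\le r\}$ and the closed metric balls are compact (this is exactly where completeness enters, and why the hypothesis $r>(2R_0)^N$ matters, since it keeps $r^{1/N}-R_0$ positive) and that their topological boundaries sit inside the appropriate level sets, so that the maximum principle may be applied and the maxima are genuinely attained. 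Everything else is bookkeeping with the triangle inequality.
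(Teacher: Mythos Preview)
Your proposal is correct and follows essentially the same route as the paper: both arguments use the triangle inequality to sandwich the $\rho$-sublevel (or level) sets between metric balls centred at $o$, and then invoke the maximum modulus principle to pass from sphere-maxima to ball-maxima. The only cosmetic difference is that the paper shows $S_\rho(r)\subset B_o(r^{1/N}+R_0)$ and $S_o(r)\subset\{\rho\le (r+R_0)^N\}$ directly and then applies the maximum modulus principle, whereas you first establish the full sublevel-set sandwich $\{r_o\le r^{1/N}-R_0\}\subseteq\{\rho\le r\}\subseteq\{r_o\le r^{1/N}+R_0\}$ and are more explicit about the compactness and boundary issues; the substance is identical.
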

\begin{proof}
By the triangle inequality
\begin{equation}\label{eq-triangle}
r_o(x)-R_0\leq r_{p_i}(x)\leq r_o(x)+R_0,\  \forall x\in M,
\end{equation}
for $i=1,2,\cdots,n$. Then, for any $x$ with $\rho(x)=r>(2R_0)^N$, we have
\begin{equation}
(r_o(x)+R_0)^N\geq \rho(x)=r>(2R_0)^N
\end{equation}
which implies that
\begin{equation}
r_o(x)>R_0.
\end{equation}
Then, by the left hand side of \eqref{eq-triangle},
\begin{equation}
(r_o(x)-R_0)^N\leq \rho(x)=r.
\end{equation}
Thus
\begin{equation}
r_o(x)\leq r^\frac{1}{N}+R_0
\end{equation}
which means that $S_\rho(r)\subset B_o(r^\frac1N+R_0)$ for any $r>(2R_0)^N$. Therefore, by the principle of maximal modulus for holomorphic functions,
\begin{equation}
M_\rho(|f|,r)\leq M_o(|f|,r^\frac1N+R_0).
\end{equation}
On the other hand, for any $x\in S_o(r)$, by the right hand side of \eqref{eq-triangle},
\begin{equation}
\rho(x)\leq (r+R_0)^N.
\end{equation}
So, $S_o(r)\subset B_\rho((r+R_0)^N):=\{x\in M\ |\ \rho(x)\leq (r+R_0)^N\}$ and hence
\begin{equation}
M_o(|f|,r)\leq M_\rho(|f|,(r+R_0)^N)
\end{equation}
by the principle of maximal modulus for holomorphic functions again.
Thus
\begin{equation}
M_\rho(|f|,r)\geq M_o(|f|,r^\frac1N-R_0)
\end{equation}
for any $r>R_0^N$. This completes the proof of the theorem.
\end{proof}
We are now ready to prove Theorem \ref{thm-Bezout}.
\begin{proof}[Proof of Theorem \ref{thm-Bezout}]
Let $p_1,p_2,\cdots,p_n$ be $n$ distinct roots of $f$ with $m_i=\ord_{p_i}(f)$ for $i=1,2,\cdots,n$. Let $\rho=r_{p_1}^{m_1}r_{p_2}^{m_2}\cdots r_{p_n}^{m_n}$. Then, by the Laplacian comparison, we know that
\begin{equation}\label{eq-Laplacian-comparison}
\Delta r_{p_i}\leq \frac{1}{r_{p_i}}
\end{equation}
for $i=1,2,\cdots,n$ in the sense of distribution. So,
\begin{equation}
\Delta \log r_{p_i}\leq 0
\end{equation}
on $M\setminus\{p_i\}$ in the sense of distribution which means that $\log r_{p_i}$ is a supharmonic function on $M\setminus\{p_i\}$ for $i=1,2,\cdots,n$. So $\log \rho=\sum_{i=1}^nm_i\log r_{p_i}$ is a supharmonic function on $\Omega=M\setminus\{p_1,p_2,\cdots,p_n\}$. Moreover, it is clear that $\log(|f|^2+\e)$ is a subharmonic function on $M$. Then, by Theorem \ref{thm-three-circle}, $\log M_\rho(|f|^2+\e,r)$ is a convex function of $\log r$. Letting $\e \to 0^+$, we know that $\log M_\rho(|f|,r)$ is a convex function of $\log r$.  Then, $\log M_\rho(|f|,r)-\log r$ is also a convex function of $\log r$. Therefore, for any $0<r_1<r_2<r_3$,
\begin{equation}\label{eq-convex}
\begin{split}
&\log M_\rho(|f|,r_2)-\log r_2\\
\leq& \frac{\log r_3-\log r_2}{\log r_3-\log r_1}(\log M_\rho(|f|,r_1)-\log r_1)+\frac{\log r_2-\log r_1}{\log r_3-\log r_1}(\log M_\rho(|f|,r_3)-\log r_3).
\end{split}
\end{equation}
Letting $r_1\to 0^+$ in the last inequality, and using Lemma \ref{lem-limit-r-0}. we get
 \begin{equation}\label{eq-increasing}
\begin{split}
\log M_\rho(|f|,r_2)-\log r_2\leq \log M_\rho(|f|,r_3)-\log r_3
\end{split}
\end{equation}
for any $0<r_2<r_3$.  Thus, letting $r_2=1$ in \eqref{eq-increasing}, we have
\begin{equation}\label{eq-f-linear}
M_\rho(|f|,r)\geq M_\rho(|f|,1)r
\end{equation}
for any $r>1$. Let $o\in M$ be a fixed point and $R_0=\max\{r(o,p_i)\ |\ 1\leq i\leq n\}$. Then, by Lemma \ref{lem-compare-M} and \eqref{eq-f-linear}, for any $r$ large enough,
\begin{equation}
M_o(|f|,r^\frac{1}{N}+R_0)\geq M_\rho(|f|,r)\geq M_\rho(|f|,1)r.
\end{equation}
Hence, for any $r$ large enough,
\begin{equation}
M_o(|f|,r)\geq M_\rho(|f|,1)(r-R_0)^N.
\end{equation}
So
\begin{equation}
\deg(f)\geq N=\sum_{i=1}^n\ord_{p_i}(f).
\end{equation}
This completes the proof of the Bezout estimate \eqref{eq-Bezout}.

We next come to prove the rigidity of \eqref{eq-Bezout}. Let $f$ be a nonconstant holomorphic function of polynomial growth such that equality of \eqref{eq-Bezout} holds. Let $d=\deg(f)$. It is then clear from Cheng's Liouville theorem for harmonic functions that $d\geq 1$. Let $p_1,p_2,\cdots,p_n$ be all the distinct roots of $f$, and $m_i=\ord_{p_i}(f)$ for $i=1,2,\cdots,n$. Then $d=\sum_{i=1}^n m_i$. By Lemma \ref{lem-compare-M}, we know that
\begin{equation}
\frac{\log M_o(|f|, r^\frac1d-R_0)}{\log r}\leq \frac{\log M_\rho(|f|,r)}{\log r}\leq \frac{\log M_o(|f|,r^\frac1d+R_0)}{\log r}
\end{equation}
when $r$ is large enough. Moreover, note that
\begin{equation}
\lim_{r\to +\infty}\frac{\log M_o(|f|,r^\frac1d+R_0)}{\log r}=\lim_{t\to+\infty}\frac{\log M_o(|f|,t)}{d\log(t-R_0)}=1
\end{equation}
and
\begin{equation}
\lim_{r\to+\infty}\frac{\log M_o(|f|, r^\frac1d-R_0)}{\log r}=\lim_{t\to+\infty}\frac{\log M_o(|f|,t)}{d\log(t+R_0)}=1
\end{equation}
where we have used the following conclusion proved in \cite{Liu2}:
\begin{equation}
\deg(f)=\lim_{r\to+\infty}\frac{\log M_o(|f|,r)}{\log r}
\end{equation}
for any nonzero $f\in \mathcal P(M)$. Thus, by the squeezing rule,
\begin{equation}
\lim_{r\to+\infty}\frac{\log M_\rho(|f|,r)}{\log r}=1.
\end{equation}
Then, by letting $r_3\to+\infty$ in \eqref{eq-convex}, we get
  \begin{equation}
\begin{split}
\log M_\rho(|f|,r_2)-\log r_2\leq \log M_\rho(|f|,r_1)-\log r_1
\end{split}
\end{equation}
for any $0<r_1<r_2$. Combining this with \eqref{eq-increasing}, we know that $$\log M_\rho(|f|,r)-\log r=C$$ for some constant $C$. For each $r>0$, let $z_r\in S_\rho(r)$ achieve the maximal modulus of $f$ on $S_\rho(r)$ and
\begin{equation}
F(x)=\log |f(x)|-\log \rho(x)-C.
\end{equation}
Then $F\leq 0$ and $F(z_r)=0$. So, $F$ as a subharmonic function on $\Omega$ achieves its maximum value on $z_r$. By strong maximum principle for subharmonic functions, we know that $F\equiv 0$ and hence
\begin{equation}
\log \rho(x)=\log |f(x)|-C
\end{equation}
is harmonic $\Omega$ since $\Delta\log|f|=0$ on $\Omega$. So
\begin{equation}
\Delta \log r_{p_1}=\Delta \log r_{p_2}=\cdots=\Delta \log r_{p_n}=0
\end{equation}
and the equality of the Laplacian comparison \eqref{eq-Laplacian-comparison} holds. Thus $(M,g)$ is flat and hence is biholomorphically isometric to $\C$ or a cylinder. Note that a cylinder admits no nonconstant holomorphic function of polynomial growth. So, $(M,g)$ is biholomorphically isometric to $\C$.
\end{proof}

\end{document}